\newtheorem{theorem}{Theorem}[section]
\newtheorem{lemma}[theorem]{Lemma}
\newtheorem{proposition}[theorem]{Proposition}
\newcounter{paraga}[section]
\renewcommand{\theparaga}{{\bf\arabic{paraga}.}}
\newcommand{\paraga}{\medskip \addtocounter{paraga}{1} 
\noindent{\theparaga\ } }
\begin{document}

\bibliographystyle{amsalpha}

\def\MP{\,{<\hspace{-.5em}\cdot}\,}
\def\SP{\,{>\hspace{-.3em}\cdot}\,}
\def\PM{\,{\cdot\hspace{-.3em}<}\,}
\def\PS{\,{\cdot\hspace{-.3em}>}\,}
\def\EP{\,{=\hspace{-.2em}\cdot}\,}
\def\PP{\,{+\hspace{-.1em}\cdot}\,}
\def\PE{\,{\cdot\hspace{-.2em}=}\,}
\def\N{\mathbb N}
\def\C{\mathbb C}
\def\Q{\mathbb Q}
\def\R{\mathbb R}
\def\T{\mathbb T}
\def\A{\mathbb A}
\def\Z{\mathbb Z}
\def\demi{\frac{1}{2}}
\def\eps{\varepsilon}
\def\pfrac#1#2{{\textstyle{#1\over#2}}}
\def\abs#1{\vert #1\vert}

\begin{titlepage}
\author{Abed Bounemoura}
\title{\LARGE{\textbf{Nekhoroshev estimates for finitely differentiable quasi-convex Hamiltonians}}}
\end{titlepage}

\maketitle

\begin{abstract}
A major result about perturbations of integrable Hamiltonian systems is the Nekhoroshev theorem, which gives exponential stability for all solutions provided the system is analytic and the integrable Hamiltonian is generic. In the particular but important case where the latter is quasi-convex, these exponential estimates have been generalized by Marco and Sauzin if the Hamiltonian is Gevrey regular, using a method introduced by Lochak in the analytic case. In this paper, using the same approach, we investigate the situation where the Hamiltonian is assumed to be only finitely differentiable, for which it is known that exponential stability does not hold but nevertheless we prove estimates of polynomial stability. 
\end{abstract}

\section{Introduction} \label{s1}

In this paper, we are concerned with the stability properties of near-integrable Hamiltonian systems of the form
\begin{equation*}
\begin{cases} 
H(\theta,I)=h(I)+f(\theta,I) \\
|f| < \varepsilon <\!\!<1
\end{cases}
\end{equation*}
where $(\theta,I) \in \T^n \times \R^n$ are action-angle coordinates for the integrable part $h$ and $f$ is a small perturbation in some suitable topology defined by a norm $|\,.\,|$. More precisely, we are interested in the evolution of the action variables $I(t)$, which are trivially constant in the absence of perturbation.

\paraga The first main result in this direction is given by an application of the KAM theory on the persistence of quasi-periodic solutions (see \cite{Pos01} for a recent exposition). Assuming $h$ satisfies some non-degeneracy condition and the system is analytic, if $\varepsilon$ is sufficiently small there exists a constant $c$ such that
\[ |I(t)-I_0| \leq c\sqrt\varepsilon, \quad t\in\R, \] 
for ``most" initial actions $I_0$, more precisely for a set of large measure but with empty interior. When $n=2$, this is even true for all solutions provided $h$ is isoenergetically non degenerate, but for $n>3$, the famous example of Arnold (\cite{Arn64}) shows that there exist ``unstable" solutions, along which the variation of the actions can be arbitrarily large no matter how small the perturbation is. From its very beginning, KAM theory was known to hold for non-analytic Hamiltonians (see \cite{Mos62} in the context of twist maps). It is now well established in various regularity classes, including the $C^{\infty}$ case (essentially by Herman, see \cite{Bos86} and \cite{Fej04}) and the Gevrey case (\cite{Pop04}). Following ideas of Moser, the theorem also holds if $H$ is only of class $C^k$, with $k>2n$ (see \cite{Pos82}, \cite{Sal04}, \cite{SZ89} and also \cite{Alb07} for a refinement), even though the minimal number of derivatives is still an open question, except in a special case for $n=2$ (\cite{Her86}).

\paraga Another fundamental result, which complements KAM theory, is given by Nekhoroshev's theorem (\cite{Nek77}, \cite{Nek79}). If the integrable part $h$ satisfies some generic condition and the system is analytic, then for $\varepsilon$ sufficiently small there exist positive constants $c_1,c_2,c_3,a$ and $b$ such that
\[ |I(t)-I_0| \leq c_1\varepsilon^b, \quad |t|\leq c_2\exp(c_3\varepsilon^{-a}), \]   
for all initial actions $I_0$. Hence all solutions are stable, not for all time, but for an exponentially long time. In the special case where $h$ is strictly quasi-convex, a completely new proof of these estimates was given by Lochak (\cite{Loc92}) using periodic averaging and simultaneous Diophantine approximation. The method of Lochak has had many applications, in particular it was used by Marco and Sauzin to extend Nekhoroshev's theorem to the Gevrey regular case under the quasi-convexity assumption (\cite{MS02}). 

\paraga However, no such estimates have been studied when the Hamiltonian is merely finitely differentiable, and this is the content of the present paper. We will prove below (Theorem~\ref{theoremeCk1}) that if $H$ is of class $C^k$, for $k\geq 2$, and $h$ quasi-convex, then one has the stability estimates   
\[|I(t)-I_0| \leq c_1\varepsilon^{\frac{1}{2n}}, \quad |t|\leq c_2\varepsilon^{-\frac{k-2}{2n}},\]
for some positive constants $c_1$ and $c_2$, and provided that $\varepsilon$ is small enough. Of course, under our regularity assumption the exponential estimates have been replaced with polynomial estimates, and earlier examples show that exponential stability cannot possibly hold under such a weak regularity assumption (this is discussed in \cite{MS04}). The proof will use once again the ideas of Lochak which, among other things, reduces the analytic part to its minimum and we will also follow the implementation of Marco and Sauzin in the Gevrey case.

\paraga As we recalled above, KAM theory for finitely differentiable Hamiltonian systems has been widely studied, and so we believe that Nekhoroshev's estimates under weaker regularity assumptions have their own interest. Moreover, for obvious reasons, examples of unstable solutions (so-called Arnold diffusion) are more easily constructed in the non-analytic case, and it is a natural question to estimate the speed of instability (see \cite{KL08a} and \cite{KL08b} for examples of class $C^k$ with a polynomial speed of diffusion). Finally, one of our motivations is to generalize these estimates using the method of \cite{BN09}, where Lochak's ideas are extended to deal with analytic but $C^k$-generic unperturbed Hamiltonians, with $k>2n+2$.  

\section{Main result} \label{s2}

\paraga Let $\T^n=\R^n/\Z^n$, and consider a Hamiltonian function $H$ defined on the domain 
\[ \mathcal{D}_R=\T^n \times B_R,\] 
where $B_R$ is the open ball of $\R^n$ around the origin of radius $R$, with respect to the supremum norm $|\,.\,|$. As usual, we shall occasionally identify $H$ with a function defined on $\R^n\times B_R$ which is $1$-periodic with respect to the first $n$ variables. 

We assume that $H$ is of class $C^k$, for an integer $k\geq 2$, {\it i.e.} it is $k$-times differentiable and all its derivatives up to order $k$ extend continuously to the closure $\overline{\mathcal{D}}_R$. We denote by $C^k(\mathcal{D}_R)$ the space of such functions, which is a Banach space with the norm
\[|H|_{C^k(\mathcal{D}_R)}=\sup_{0\leq l\leq k}\sup_{|\alpha|=l}\left(\sup_{x\in\overline{\mathcal{D}}_R}|\partial^\alpha H(x)|\right)\] 
where $x=(\theta,I)$, $\alpha=(\alpha_1,\dots,\alpha_{2n})\in\N^{2n}$, $|\alpha|=\alpha_1+\cdots+\alpha_{2n}$ and 
\[ \partial^\alpha=\partial_{1}^{\alpha_1} \dots \partial_{2n}^{\alpha_{2n}}.\] 
In the case where the Hamiltonian $H=h$ depends only on the action variables, we will simply write $|h|_{C^k(B_R)}$.  

Our Hamiltonian $H \in C^k(\mathcal{D}_R)$ is assumed to be $C^k$-close to integrable, that is, of the form
\begin{equation}\label{HamCk}
\begin{cases} \tag{$\ast$}
H(\theta,I)=h(I)+f(\theta,I) \\
|f|_{C^k(\mathcal{D}_R)}<\varepsilon <\!\!< 1
\end{cases}
\end{equation}
where $h$ is the integrable part and $f$ a small perturbation of size $\varepsilon$ in the $C^k$ topology. 

We may assume that there exists a positive constant $M$ such that 
\begin{equation} \label{bounded}
|h|_{C^k(B_R)}<M. \tag{B}
\end{equation} 
Moreover, we will require that $h$ satisfy the following quasi-convexity assumption: there exists a positive constant $m$ such that
\begin{equation} \label{quasiconv}
\forall I\in B_R, \forall v\in\R^n, \quad \nabla h(I).v=0 \Longrightarrow \nabla^2 h(I)v.v \geq m|v|^2. \tag{C}
\end{equation} 

\paraga Our main theorem is the following.

\begin{theorem}\label{theoremeCk1}
Let $H$ be as in~(\ref{HamCk}), with $k\geq 3$, $h$ satisfying~(\ref{bounded}) and~(\ref{quasiconv}), and set
\[ a=\frac{k-2}{2n}, \quad b=\frac{1}{2n}.\] 
Then there exist $\varepsilon_0$, $c_1$ and $c_2$ such that if $\varepsilon\leq\varepsilon_0$, all solutions $(\theta(t),I(t))$ of $H$ with $I(0)\in B_{R/2}$ satisfy
\[ |I(t)-I(0)|\leq c_1\varepsilon^b, \quad |t| \leq c_2 \varepsilon^{-a}. \]
\end{theorem}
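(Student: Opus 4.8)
\emph{Proof strategy.} The plan is to run Lochak's periodic-averaging scheme, in the form implemented by Marco and Sauzin, after first replacing the merely $C^k$ perturbation by an analytic one; the finite differentiability will manifest itself only through an algebraically small (rather than exponentially small) floor on the size of the averaging remainder, which is what turns the exponential stability time into a polynomial one. Concretely, I would fix a width $s>0$, to be optimized at the very end, and approximate $f$ by a function $f_s$ that is holomorphic on the complexification of $\mathcal{D}_R$ of width $s$, with $|f_s|_s\lesssim\varepsilon$ (the sup-norm on the complex strip) and with approximation error controlled in the $C^2$ topology, $|f-f_s|_{C^2(\mathcal{D}_R)}\lesssim s^{k-2}\varepsilon$. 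Such an $f_s$ is produced by convolving with a suitable analytic mollifier at scale $s$; the loss of two derivatives here (whence the ``$k-2$'') is exactly what is needed to keep the Hamiltonian vector field of the non-analytic remainder $r_s:=f-f_s$, together with its first variation, under control in the confinement argument. One then writes $H=H_s+r_s$ with $H_s:=h+f_s$ analytic, and regards $r_s$ as a small, non-removable source term.

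Next I would treat the analytic Hamiltonian $H_s$ by Lochak's method. Through the initial frequency $\omega_0=\nabla h(I(0))$, simultaneous Diophantine approximation (Dirichlet's theorem) produces a periodic frequency $\omega_*=p/T$, with $p\in\Z^n$, of controlled period $T$ and with $|\omega_0-\omega_*|$ small; quasi-convexity guarantees that the frequency map $\nabla h$ is a local diffeomorphism, so this amounts to selecting a nearby periodic torus in action space. Along the $T$-periodic flow of $h$ one performs one-phase averaging of $H_s$, obtaining a real-analytic, symplectic, near-identity transformation $\Phi$ that casts $H_s\circ\Phi$ into a resonant normal form $h+g_*+R_*$, where $g_*$ is the average of $f_s$ along the periodic orbit (depending only on the action and on the single resonant angle) and the analytic remainder $R_*$ can be driven down to the floor set by $r_s$. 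Applying the \emph{same} $\Phi$ to the full Hamiltonian gives $H\circ\Phi=h+g_*+R_*+r_s\circ\Phi$, and the quantitative averaging estimates should guarantee that $r_s\circ\Phi$ is still of size $\lesssim s^{k-2}\varepsilon$ in the relevant norm.

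The confinement then comes from quasi-convexity, as in the analytic case. Since energy is conserved along the flow of the true Hamiltonian $H$, and $H\circ\Phi$ differs from $h+g_*$ by a remainder of sup-norm $\lesssim\max(|R_*|,|r_s|_{C^0})$, the level-set geometry provided by~(\ref{quasiconv}) confines the fast components of the action to a ball of radius $\lesssim\varepsilon^{1/2}$ around the periodic torus for as long as the normal form is valid. Covering the whole action ball $B_{R/2}$ by such periodic-torus neighborhoods (the geometric heart of Lochak's argument) and chaining the local estimates then upgrades this to the global bound $|I(t)-I(0)|\lesssim\varepsilon^{1/(2n)}$, giving $b=1/(2n)$. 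The validity time of the normal form, hence the stability time, is essentially the reciprocal of the total remainder; optimizing $s$ together with the Diophantine parameter governing $T$, so as to balance the exponentially small analytic remainder $R_*$ against the algebraic floor $s^{k-2}\varepsilon$ and against the covering geometry in $\R^n$, is what should produce the time exponent $a=(k-2)/(2n)$.

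I expect the main obstacle to be precisely this final balancing: proving the one-phase averaging lemma for $H_s$ with fully explicit dependence on both the period $T$ and the width $s$, and then combining it with the smoothing error so that the non-exponentially-small remainder $r_s$ is handled \emph{uniformly} over all initial conditions. In the analytic theory the remainder can be made exponentially small and the covering is comparatively forgiving; here the algebraic floor $s^{k-2}\varepsilon$ is the bottleneck, and one must check that the quasi-convex confinement survives a remainder that is only polynomially small and only finitely differentiable, so that $\Phi$ and $r_s\circ\Phi$ retain just enough regularity for the energy argument. Making the derivative bookkeeping in the smoothing step line up exactly with the two derivatives consumed by the convex confinement—so that the time exponent is genuinely $k-2$ rather than $k-1$ or $k$—is the delicate point.
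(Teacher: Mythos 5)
Your overall architecture (smooth the perturbation, run the analytic Lochak--Marco--Sauzin machinery, confine by quasi-convexity) is a legitimate alternative to the paper's route, but as written it has two genuine gaps, one structural and one quantitative. The structural one: $H_s:=h+f_s$ is \emph{not} analytic, because the integrable part $h$ is itself only $C^k$ --- assumption~(\ref{bounded}) gives nothing more --- so ``treating the analytic Hamiltonian $H_s$ by Lochak's method'' does not get off the ground. If you repair this by smoothing $h$ as well, the error $h-h_s$ has size $s^{k-j}M$ in the $C^j$ norm, proportional to $M$ and \emph{not} to $\varepsilon$, and it cannot be shrugged off as a source term: in the confinement step (Proposition~\ref{propCk1}) it pollutes the bound $|g+\tilde f|_{C^0}\MP\mu^2$, where $\mu\EP T^{-1}\varepsilon^{1/(2n)}$ and the Dirichlet period can be as large as $T\EP\varepsilon^{-(n-1)/(2n)}$, so that $\mu^2\EP\varepsilon$ in the worst case; since your balancing forces $s\gtrsim T\mu=\varepsilon^{1/(2n)}$, the contribution $|h-h_s|_{C^0}\sim s^kM\sim\varepsilon^{k/(2n)}M$ violates that bound whenever $k<2n$, i.e.\ in most of the range $k\geq 3$ the theorem covers. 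The paper avoids this entirely: it performs Lochak's localization and rescaling around a $T$-periodic action \emph{first}, after which the only integrable Hamiltonian the averaging sees is the linear one $l(I)=\omega.I$ (trivially ``analytic''; the quadratic remainder of $h$ is absorbed into the perturbation $f_\mu$ of size $\mu$), and then it averages directly in the $C^k$ category, performing exactly $k-2$ steps, each losing one derivative and gaining a factor $T\mu$ (Propositions~\ref{lemmehamLCk} and~\ref{lemmehamCk}). If you want to keep analytic smoothing, it must be done after this localization, orbit by orbit, not once and globally at the outset.

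The quantitative gap: even where the analytic machinery applies, balancing the exponentially small remainder against your floor $s^{k-2}\varepsilon$ cannot produce the clean exponent $a=(k-2)/(2n)$. The minimum over $s$ of $\max\bigl(e^{-cs/(T\mu)},s^{k-2}\bigr)$ is attained at $s$ of order $T\mu\log(1/(T\mu))$ and equals $\bigl(T\mu\log(1/(T\mu))\bigr)^{k-2}$ up to constants, so you obtain only $|t|\MP\varepsilon^{-a}\bigl(\log(1/\varepsilon)\bigr)^{-(k-2)}$, which does \emph{not} imply $|t|\leq c_2\varepsilon^{-a}$ for any constant $c_2$. The repair is that your derivative bookkeeping is off: Proposition~\ref{propCk1} requires smallness only of $|g+\tilde f|_{C^0}$ and $|\partial_{\tilde\theta}\tilde f|_{C^0}$, i.e.\ $C^1$-smallness of the non-analytic residue, while its $C^2$ norm need only be finite (of order $\varepsilon$, which is automatic) to guarantee uniqueness of solutions. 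With the floor $s^{k-1}\varepsilon$ you gain a spare factor $T\mu$ that swallows the logarithm and yields the theorem --- in fact with room to spare: contrary to your final remark, the smoothing route done correctly naturally gives an exponent close to $(k-1)/(2n)$, not $(k-2)/(2n)$; the paper's $k-2$ has a different origin, namely that only $k-2$ averaging steps can be made in $C^k$ before the transformation ceases to be $C^2$. Finally, a small but real error: quasi-convexity does not make $\nabla h$ a local diffeomorphism ($h(I)=I_1+I_2^2$ satisfies~(\ref{quasiconv}) with a singular Hessian); what it gives, and what the paper uses via~\cite{MS02}, is isoenergetic non-degeneracy, which is what converts Dirichlet's approximation of the frequency $\omega_0$ into a nearby periodic action in Proposition~\ref{propCk2}.
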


First note that we have stated our theorem for $H$ of class $C^k$, $k\geq3$, but the statement also holds with no changes if $H$ is of class $C^{k-1,1}$, $k\geq 3$, that is, $H$ is of class $C^{k-1}$ and its partial derivatives of order $k-1$ are Lipschitz continuous. For $C^2$ systems, these estimates also hold true but it is not useful since trivially all solutions satisfy
\[ |I(t)-I(0)|\leq \varepsilon, \quad |t| \leq 1. \] 
In fact for $C^2$ Hamiltonians for which the derivatives up to order $2$ are not more than continuous, we believe that one cannot obtain better estimates.

Moreover, the time of stability obtained is ``optimal" in the sense that one can construct examples of unstable orbits with a polynomial speed of diffusion, but we do not know what the optimal exponents should be. However, using the geometric arguments of \cite{BM10} one can easily improve the stability exponent $a$ in order to obtain
\[ a=\frac{k-2}{2(n-1)}-\delta, \] 
for $\delta>0$ but arbitrarily small.

Let us finally point out that if $H$ is $C^\infty$, then it is an immediate consequence of the above result that the action variables are stable for an interval of time which is longer than any prescribed power of $\varepsilon^{-1}$, but even in this case exponential stability does not hold.

\paraga As in the analytic or Gevrey case, we can also state a refined result near resonances. Suppose $\Lambda$ is a sub-module of $\Z^n$ of rank $m$, $d=n-m$ and let $S_{\Lambda}$ be the corresponding resonant manifold, that is
\[ S_{\Lambda}=\{I \in B_R \; | \; k.\nabla h(I)=0, \; k \in \Lambda\}. \] 
We can prove the following statement, which actually contains the previous one.

\begin{theorem}\label{theoremeCk2}
Under the previous hypotheses, assume $d(I(0),S_{\Lambda}) \leq \sigma\sqrt{\varepsilon}$ for some constant $\sigma>0$, and set
\[ a_d=\frac{k-2}{2d}, \quad b_d=\frac{1}{2d}.\]
Then there exist $\varepsilon_0'$, $c_1'$ and $c_2'$ such that if $\varepsilon\leq\varepsilon_0'$, one has 
\[ |I(t)-I(0)|\leq c_1' \varepsilon^{b_d}, \quad |t| \leq c_2'\varepsilon^{-a_d}.  \]
\end{theorem}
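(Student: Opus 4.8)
The plan is to prove Theorem~\ref{theoremeCk2} directly and to recover Theorem~\ref{theoremeCk1} as the special case $\Lambda=\{0\}$, for which $m=0$, $d=n$, the resonant manifold $S_\Lambda$ is all of $B_R$, the hypothesis $d(I(0),S_\Lambda)\leq\sigma\sqrt\varepsilon$ holds trivially, and $a_n=a$, $b_n=b$. The strategy is the one of Lochak as implemented by Marco and Sauzin, namely periodic averaging together with simultaneous Diophantine approximation, but with the analytic step replaced by an \emph{analytic smoothing}. Concretely, I would first approximate the $C^k$ perturbation $f$ by a family $f_s$, analytic on a complex strip of width $s$, satisfying both the approximation bound $|f-f_s|_{C^0(\mathcal{D}_R)}\leq C s^k|f|_{C^k(\mathcal{D}_R)}$ and a uniform bound $\lesssim|f|_{C^k(\mathcal{D}_R)}<C\varepsilon$ on the supremum of $f_s$ over the strip (and similarly for $h$, whose quasi-convexity~(\ref{quasiconv}) is preserved, with a slightly smaller constant, for $s$ small since $k\geq 3$). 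Thus $H_s=h_s+f_s$ is an analytic, $\varepsilon$-close to integrable Hamiltonian that differs from $H$ by a non-analytic, and hence non-removable, error of size $\lesssim s^k\varepsilon$.

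Next I would run Lochak's machinery on the analytic model $H_s$. Given $I(0)$ with $d(I(0),S_\Lambda)\leq\sigma\sqrt\varepsilon$, Dirichlet's theorem applied in the $d=n-m$ directions transverse to $\Lambda$ produces a frequency $\omega^*$ that is $T$-periodic modulo $\Lambda$ and $T^{-1/d}$-close to $\nabla h(I(0))$. A finite sequence of periodic averaging steps then conjugates $H_s$ to a resonant normal form $h_s+g+f_*$, with $g$ invariant under the periodic flow, hence depending only on $I$ and the $m$ resonant angles, and a remainder bounded, via Cauchy estimates on the strip of width $s$, by $|f_*|\lesssim\varepsilon\exp(-cs/(\varepsilon T))$ as long as $\varepsilon T\lesssim s$. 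The confinement then comes from the quasi-convexity assumption~(\ref{quasiconv}) together with conservation of energy: along an orbit of the normal form the components of the action transverse to $\Lambda$ are trapped in a quadratic well centred on the period-$T$ resonant torus, whose size is governed by the Diophantine gap $T^{-1/d}$ and by the total remainder, and pulling back by the close-to-identity averaging transformation transfers this confinement to $H_s$.

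The key new feature, and the main obstacle, is that in finite regularity one cannot make the oscillatory remainder arbitrarily small: the non-analytic error $H-H_s$ of size $s^k\varepsilon$ is a hard floor below which $f_*$ cannot be pushed, and it enters both the trapping well and the drift velocity $\dot I\sim\partial_\theta(H-H_s)$ of the true system. All of the gain over the trivial $C^2$ estimate must therefore be extracted by optimizing the smoothing width $s$: it has to be chosen large enough that enough averaging steps are allowed for the exponential remainder $\varepsilon\exp(-cs/(\varepsilon T))$ to be dominated by the floor $s^k\varepsilon$, yet small enough that this floor does not spoil the $\varepsilon^{b_d}$ confinement over the whole time $\varepsilon^{-a_d}$. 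I expect the optimal choice to have $s$ and the confinement radius both of order $\varepsilon^{1/2d}$ and the period $T$ polynomially related to $s/\varepsilon$, so that the gap $T^{-1/d}$, the exponent $b_d=1/2d$ and the time exponent $a_d=(k-2)/2d$ appear together; as a consistency check, at $k=2$ one finds $a_d=0$, recovering the trivial stability estimate quoted after Theorem~\ref{theoremeCk1}, and each additional derivative buys a factor $\varepsilon^{-1/2d}$ of stability time.

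Finally, I would upgrade the local statement into the global one by the usual continuation argument: since the confinement radius $\varepsilon^{b_d}$ is larger than the $\sqrt\varepsilon$ resonant layer (because $b_d\leq 1/2$), as the action drifts one re-centres on a neighbouring periodic torus, and quasi-convexity guarantees that the successive trapping regions overlap, so that the orbit cannot escape before time $\varepsilon^{-a_d}$. The routine but lengthy part will be the explicit bookkeeping of the constants $\varepsilon_0'$, $c_1'$ and $c_2'$ in terms of $n$, $d$, $m$, $M$, $R$ and the smoothing constants; the conceptual content is entirely in the analytic-smoothing/averaging trade-off described above.
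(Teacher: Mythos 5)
Your strategy is a genuinely different route from the paper's. The paper never smooths: it runs Lochak's periodic averaging \emph{directly in the $C^k$ category}, performing exactly $k-2$ averaging steps, each of which costs one derivative (the $j$-th transformation is only $C^{k-j}$), and stopping at $C^2$ so that the transformed vector field is still $C^1$; this yields the normal form $H\circ\Phi=h+g+\tilde f$ with the \emph{polynomial} remainder $|\partial_{\tilde\theta}\tilde f|_{C^0} \MP (T\mu)^{k-2}\mu^2$ (Propositions~\ref{lemmehamLCk} and~\ref{lemmehamCk}), which is then combined with the same Dirichlet approximation (Proposition~\ref{propCk2}) and energy-conservation/quasi-convexity trapping (Proposition~\ref{propCk1}) that you invoke. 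Your scheme instead imports the analytic theory as a black box via Jackson-type smoothing and extracts the exponent $k-2$ from an optimization over the width $s$; this is a legitimate alternative (and your observations that one must also smooth $h$, and that quasi-convexity survives smoothing because $k\geq 3$, are exactly the right compatibility checks). The paper's choice buys elementarity and makes the origin of the exponent transparent --- it is literally the number of averaging steps the regularity allows --- while yours buys modularity and reusability of the analytic machinery.

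Two points in your sketch need repair, though neither is fatal. First, the remainder of analytic periodic averaging on a strip of width $s$ around the $\mu$-neighbourhood of the periodic action is of order $\mu^2\exp(-cs/(T\mu))$, not $\varepsilon\exp(-cs/(\varepsilon T))$, and Dirichlet forces $T\mu \EP \varepsilon^{1/2d}$; consequently your proposed calibration $s\EP\varepsilon^{1/2d}$ makes the exponential merely $O(1)$ and yields no gain whatsoever. You must take $s$ strictly larger, e.g.\ $s\EP \varepsilon^{1/2d}\log(\varepsilon^{-1})$, or $s\EP\varepsilon^{\alpha}$ with $\frac{k-2}{2d(k-1)}\leq \alpha<\frac{1}{2d}$; this window is nonempty precisely because the smoothing floor enters the drift as $s^{k-1}\varepsilon$ (one derivative of the $C^0$ error $s^k\varepsilon$), and inside it both constraints --- exponential remainder and smoothing floor below $\mu^2\tau^{-1}$ with $\tau\EP\varepsilon^{-(k-2)/2d}$ --- are satisfied, so your exponents do come out. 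Second, your final ``continuation/re-centring'' paragraph misdescribes how Lochak's argument closes: there is no chaining of trapping regions. The single normal form around the single periodic action $I_*$, combined with conservation of energy and quasi-convexity, confines the orbit in a ball of radius $\MP\mu$ for the \emph{entire} time $|t|\leq\tau\EP\varepsilon^{-a_d}$ in one shot (this is Proposition~\ref{propCk1}); a genuine re-centring argument would multiply the confinement radius by the number of re-centrings and would destroy the bound $c_1'\varepsilon^{b_d}$. That step should simply be deleted, with the theorem obtained by applying the three steps once to each initial condition.
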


For $\Lambda=\{0\}$, $d=n$ and $S_\Lambda=B_{R/2}$, we recover Theorem~\ref{theoremeCk1} and therefore it will be enough to prove Theorem~\ref{theoremeCk2}. 

\paraga The constants $\varepsilon_0$, $c_1$ and $c_2$ depend only on $h$, more precisely they depend on $k,n,R,M$ and $m$ while the constants $\varepsilon_0'$, $c_1'$ and $c_2'$ also depend on $\sigma$ and $\Lambda$. However we will not give explicit values for them in order to avoid complicated and rather meaningless expressions. Hence we shall replace them by the symbol $\cdot$ when it is convenient: for instance, we shall write $u \MP v$ when there exists a positive constant $c$ depending only on the previous parameters, but not on $f$, such that $u<cv$.

\paraga This paper is divided into two sections. The next section contains the analytical part of the proof, where we will construct a system of local coordinates for our Hamiltonian which is more convenient to study the evolution of the action variables. Then, in the last section we will conclude the proof using our convexity assumption and Dirichlet's theorem on simultaneous Diophantine approximation. 

\section{Analytical part} \label{s3}

\paraga Given an action $I\in B_R$ and denoting by $\omega=\nabla h(I)$ its frequency, we know from classical averaging theory that the relevant part of the perturbation 
\[ f(\theta,I)=\sum_{k\in\Z^n}\hat{f}_k(I)e^{i2\pi k.\theta}\] 
is given by those harmonics associated with integers $k\in\Z^n$ in resonance with $\omega$, that is such that $k.\omega=0$. Actually one can construct a symplectic, close-to-identity transformation $\Phi$ defined around $I$, such that 
\[ H\circ\Phi=h+g+\tilde{f} \]
where $g$ contains only harmonics in resonance with $\omega$ and $\tilde{f}$ is a small remainder. These are usually called resonant normal forms, and to obtain them one has to deal with small divisors $k.\omega$ which involve technical estimates. If the system is analytic, the above remainder $\tilde{f}$ can be made exponentially small with respect to the inverse of the size of the perturbation, as was first shown by Nekhoroshev. But for finitely differentiable systems one might guess that the remainder can only be polynomially small, even though this should be difficult (or at least technical) to prove using the usual approach.

\paraga It is a remarkable fact discovered by Lochak (\cite{Loc92}) that to prove exponential estimates in the quasi-convex case with the analyticity assumption, it is enough to average along periodic frequencies, which are frequencies $\omega$ such that $T\omega \in \Z^n\setminus\{0\}$ for some $T>0$ (see also \cite{BN09} for an extension of this method for generic integrable Hamiltonians). These periodic frequencies correspond to periodic orbits of the unperturbed Hamiltonian, hence in this approach no small divisors arise. As a consequence this special resonant normal form is much easier to obtain. The aim of this section is to construct such a normal form, up to a polynomial remainder. This will be done in~\ref{FormCk}. But first we will recall some useful estimates concerning the $C^k$ norm in~\ref{PrelE}, and then prove an intermediate statement in~\ref{LinCase}. 

\subsection{Elementary estimates}\label{PrelE}

\paraga Let us begin by recalling some easy estimates. Given two functions $f,g\in C^k(\mathcal{D}_R)$, the product $fg$ belongs to $C^k(\mathcal{D}_R)$ and by the Leibniz rule
\[ |fg|_{C^k(\mathcal{D}_R)} \MP |f|_{C^k(\mathcal{D}_R)}|g|_{C^k(\mathcal{D}_R)}. \] 
The Poisson Bracket $\{f,g\}$ belongs to $C^{k-1}(\mathcal{D}_R)$, and by its definition and the Leibniz rule one gets
\[ |\{f,g\}|_{C^{k-1}(\mathcal{D}_R)} \MP |f|_{C^k(\mathcal{D}_R)}|g|_{C^k(\mathcal{D}_R)}. \]
The above implicit constants depend only on $n$ and $k$ (in fact in the first estimate one can trivially modify the definition of the $C^k$ norm so as to have a constant equal to one, but this will not be important for us). These are very elementary facts, but we shall also need estimates concerning vector fields, canonical transformations and compositions. 

\paraga First, given a vector-valued function $F\in C^k(\mathcal{D}_R,\R^l)$, $F=(F_1,\dots,F_l)$ and $l\in\N$, we extend the norm component-wise, that is
\[ |F|_{C^k(\mathcal{D}_R)}=\sup_{1\leq i \leq l}|F_i|_{C^k(\mathcal{D}_R)}. \]
Now for a function $f\in C^k(\mathcal{D}_R)$, we define its Hamiltonian vector field $X_f$ by 
\[ X_f=(\partial_I f,-\partial_\theta f) \]
where
\[ \partial_I f=(\partial_{I_1} f,\dots,\partial_{I_n} f), \quad \partial_\theta f=(\partial_{\theta_1} f,\dots,\partial_{\theta_n} f). \]
Obviously $X_f\in C^{k-1}(\mathcal{D}_R,\R^{2n})$, and trivially
\[ |X_f|_{C^{k-1}(\mathcal{D}_R)} \leq |f|_{C^k(\mathcal{D}_R)}. \]
Moreover, by classical theorems on ordinary differential equations, if $X_f$ is of class $C^{k-1}$ then so is the time-$t$ map $\Phi_{t}^{f}$ of the vector field $X_f$, when it exists. Assuming $|\partial_\theta f|_{C^{0}(\mathcal{D}_R)}<r$ for some $r<R$ (for example $|X_f|_{C^{0}(\mathcal{D}_R)}<r$), then by the mean value theorem 
\[ \Phi^f=\Phi_{1}^{f} : \mathcal{D}_{R-r} \longrightarrow \mathcal{D}_{R} \]
is a well-defined $C^{k-1}$-embedding. In the case where $f$ is integrable, one can choose $r=0$.

In the sequel, we will need to estimate the $C^{k}$ norm of $\Phi^f$ in terms of the $C^{k}$ norm of the vector field $X_f$. More precisely we need the rather natural fact that $\Phi^f$ is $C^{k}$-close to the identity when $X_f$ is $C^{k}$-close to zero. This is trivial for $k=0$. In the general case, this follows by induction on $k$ using on the one hand the relation 
\[ \Phi_{t}^{f}=\mathrm{Id}+\int_{0}^{t}X_f \circ \Phi_{s}^{f} ds, \]
and on the other the formula of Faà di Bruno (see \cite{AR67} for example), which gives bounds of the form
\[ |F\circ G|_{C^k} \MP |F|_{C^k}|G|^{k}_{C^k} \]
and also 
\[ |F\circ G|_{C^k} \MP |F|_{C^1}|G|^{k}_{C^k}+|F|_{C^k}|G|^{k}_{C^{k-1}} \]
for $C^k$ vector-valued functions on appropriate domains (once again, the above implicit constants depend only on $k$). Let us state this as a lemma, for which we refer to \cite{DH09}, Lemma $3.15$ and appendix $C$, for a detailed proof. 

\begin{lemma}\label{estimflot} 
Let $X_f\in C^{k}(\mathcal{D}_R,\R^{2n})$, assume that $|X_f|_{C^{0}(\mathcal{D}_R)}<r$ and
\begin{equation}\label{smallnCk}
|X_f|_{C^{k}(\mathcal{D}_R)} < 1.  
\end{equation}
Then 
\[ |\Phi^f-\mathrm{Id}|_{C^{k}(\mathcal{D}_{R-r})} \MP |X_f|_{C^{k}(\mathcal{D}_R)}.  \]
\end{lemma}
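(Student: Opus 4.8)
The plan is to argue by induction on $k$, using throughout the integral fixed-point relation $\Phi_t^f=\mathrm{Id}+\int_0^t X_f\circ\Phi_s^f\,ds$ together with the composition bounds quoted above. For $k=0$ the statement is immediate: the hypothesis $|X_f|_{C^0(\mathcal{D}_R)}<r$ guarantees that $\Phi_s^f(\mathcal{D}_{R-r})\subset\mathcal{D}_R$ for all $s\in[0,1]$, so that $X_f\circ\Phi_s^f$ is defined there, and taking the supremum in the integral relation gives $|\Phi^f-\mathrm{Id}|_{C^0(\mathcal{D}_{R-r})}\leq\int_0^1|X_f|_{C^0(\mathcal{D}_R)}\,ds\leq|X_f|_{C^0(\mathcal{D}_R)}$. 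This already absorbs the only genuinely ODE-theoretic input, namely that the time-one flow is well defined on the shrunk domain.

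For the inductive step I would assume the bound up to order $k-1$. Since $|X_f|_{C^{k-1}(\mathcal{D}_R)}\leq|X_f|_{C^{k}(\mathcal{D}_R)}<1$, the inductive hypothesis yields $|\Phi_s^f-\mathrm{Id}|_{C^{k-1}(\mathcal{D}_{R-r})}\MP 1$, hence $|\Phi_s^f|_{C^{k-1}(\mathcal{D}_{R-r})}\MP 1$ uniformly in $s\in[0,1]$. Writing $\psi_t=\Phi_t^f-\mathrm{Id}$, the integral relation gives $|\psi_t|_{C^k}\leq\int_0^t|X_f\circ\Phi_s^f|_{C^k}\,ds$, so everything reduces to the $C^k$ norm of the composition $X_f\circ\Phi_s^f$. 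Here I would invoke the Fa\`a di Bruno calculus in the form that separates $|F|_{C^1}$ from $|F|_{C^k}$, applied with $F=X_f$ and $G=\Phi_s^f$: the decisive structural fact is that the only term containing the top derivative $D^k\Phi_s^f$ is $DX_f(\Phi_s^f)\cdot D^k\Phi_s^f$, which is \emph{linear} in $|\Phi_s^f|_{C^k}$ and carries the small coefficient $|X_f|_{C^1}\leq|X_f|_{C^k}<1$, while every other term involves only derivatives of $\Phi_s^f$ of order at most $k-1$ times a derivative of $X_f$ of order at most $k$, hence is $\MP|X_f|_{C^k}$ by the inductive control of $|\Phi_s^f|_{C^{k-1}}$. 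Using $|\Phi_s^f|_{C^k}\leq|\mathrm{Id}|_{C^k}+|\psi_s|_{C^k}\MP 1+|\psi_s|_{C^k}$, this gives $|X_f\circ\Phi_s^f|_{C^k}\MP|X_f|_{C^k}+|X_f|_{C^1}\,|\psi_s|_{C^k}$.

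It remains to close this self-referential inequality, which now reads $|\psi_t|_{C^k}\MP|X_f|_{C^k}+|X_f|_{C^1}\int_0^t|\psi_s|_{C^k}\,ds$ for $t\in[0,1]$. Because the factor $|X_f|_{C^1}$ is bounded (indeed $<1$), Gronwall's lemma applies directly and yields $|\psi_t|_{C^k}\MP|X_f|_{C^k}\,e^{|X_f|_{C^1}t}\MP|X_f|_{C^k}$, so that evaluating at $t=1$ gives the announced estimate $|\Phi^f-\mathrm{Id}|_{C^k(\mathcal{D}_{R-r})}\MP|X_f|_{C^k(\mathcal{D}_R)}$. The smallness assumption $|X_f|_{C^k(\mathcal{D}_R)}<1$ is what keeps all the implicit constants uniform; had one only the cruder composition bound with a power of $|G|_{C^k}$ on the top-order term, the same conclusion would follow from a bootstrap (continuity) argument on $[0,1]$ in place of Gronwall.

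The main obstacle is purely the bookkeeping hidden in the Fa\`a di Bruno formula: one must verify that the $k$-th derivative of $\Phi_s^f$ enters the composition linearly and with coefficient controlled by the \emph{first} derivative of $X_f$, so that the resulting integral inequality is linear and the small coefficient $|X_f|_{C^1}$ can be fed into Gronwall, all the remaining terms being tamed by the inductive hypothesis at order $k-1$. No small-divisor or analyticity considerations enter at any point, which is precisely the feature that makes this normal-form machinery available in the merely $C^k$ setting. For the detailed composition estimates I would refer, as the authors do, to \cite{AR67} and to \cite{DH09}.
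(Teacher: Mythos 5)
Your proposal is correct and follows essentially the same route the paper itself indicates (and outsources to \cite{DH09} for details): induction on $k$ using the integral relation $\Phi_t^f=\mathrm{Id}+\int_0^t X_f\circ\Phi_s^f\,ds$ together with the Fa\`a di Bruno composition bounds, sharpened by the correct observation that the top-order derivative of $\Phi_s^f$ enters the composition linearly with the small coefficient $|X_f|_{C^1}$, so that a Gronwall (or bootstrap) argument closes the estimate. You have simply carried out in full the bookkeeping that the paper delegates to its reference, so there is nothing to correct.
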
 

The above implicit constant depends only on $k$ and $R$. Now if $H\in C^{k}(\mathcal{D}_{R})$, under the above hypotheses we have $H\circ \Phi^f\in C^{k}(\mathcal{D}_{R-r})$ and the estimate
\begin{equation}\label{estimcomp}
|H\circ\Phi^f|_{C^{k}(\mathcal{D}_{R-r})} \MP |H|_{C^{k}(\mathcal{D}_{R})}|\Phi^f|_{C^{k}(\mathcal{D}_{R-r})}^{k}  
\end{equation} 
follows trivially from the Faà di Bruno formula. 

\subsection{The linear case}\label{LinCase}

Following \cite{MS02}, we change for a moment our setting and we consider a perturbation of a linear Hamiltonian, more precisely the Hamiltonian
\begin{equation}\label{HamLCk}
\begin{cases} \tag{$\ast\ast$}
H(\theta,I)=l(I)+f(\theta,I) \\
|f|_{C^k(\mathcal{D}_\rho)}<\mu <\!\!< 1
\end{cases}
\end{equation}  
where $\rho>0$ is fixed and $l(I)=\omega.I$ is a linear Hamiltonian with a $T$-periodic frequency $\omega$. Recall that this means that 
\[ T=\inf\{t>0 \; | \; t\omega \in \Z^n \setminus \{0\}\}\] 
is well-defined. In this context, our small parameter is $\mu$.

In the proposition below, we will construct a ``global" normal form for the Hamiltonian~(\ref{HamLCk}), which we will use in the next section to produce a ``local" normal form around periodic orbits for our original Hamiltonian~(\ref{HamCk}).

\begin{proposition} \label{lemmehamLCk}
Consider $H$ as in~(\ref{HamLCk}) with $k\geq 2$, and assume 
\begin{equation}\label{smallmu}
T\mu\MP 1, \quad |\omega|\MP 1.
\end{equation}
Then there exists a $C^2$ symplectic transformation 
\[ \Phi : \mathcal{D}_{\rho/2} \rightarrow \mathcal{D}_\rho\] 
with $|\Phi-\mathrm{Id}|_{C^{2}(\mathcal{D}_{\rho/2})}\MP T\mu$ such that 
\begin{equation*}
H \circ \Phi=l+g+f,
\end{equation*}
with $\{g,l\}=0$ and the estimates
\begin{equation*}
|g|_{C^{2}(\mathcal{D}_{\rho/2})} \MP \mu, \quad |f|_{C^{2}(\mathcal{D}_{\rho/2})} \MP (T\mu)^{k-2}\mu 
\end{equation*}
hold true.
\end{proposition}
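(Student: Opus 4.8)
The plan is to reach the normal form by iterating elementary averaging steps along the periodic flow of the linear part, losing one degree of differentiability at each step but gaining a factor $T\mu$ in size. Write $\Phi_t^l(\theta,I)=(\theta+t\omega,I)$ for the Hamiltonian flow of $l$, which by hypothesis is $T$-periodic, and for a function $\phi$ set $[\phi]=\frac{1}{T}\int_0^T\phi\circ\Phi_t^l\,dt$ for its average along this flow; by construction $\{[\phi],l\}=0$. Since $f$ is of class $C^k$ while the conclusion is required only in the $C^2$ topology, I expect to afford exactly $N=k-2$ steps: each trades one derivative for a gain of $T\mu$, so that after $N$ steps the remainder has size $(T\mu)^{k-2}\mu$ and is still of class $C^2$. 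Because each elementary transformation will be the time-one flow of a Hamiltonian vector field, the composed map is automatically symplectic.

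First I would isolate one step. Suppose we have reached $H_j=l+g_j+f_j$ on a domain $\mathcal{D}_{\rho_j}$, with $\{g_j,l\}=0$, $|g_j|_{C^{k-j}}\MP\mu$ and $|f_j|_{C^{k-j}}\MP(T\mu)^j\mu$. To remove the non-resonant part of $f_j$ I solve the homological equation $\{l,\chi_{j+1}\}=[f_j]-f_j$, which is explicit here precisely because no small divisors occur: the zero-average function $f_j-[f_j]$ admits a primitive along the periodic flow, given by a double time-integral of $(f_j-[f_j])\circ\Phi_t^l$ over $[0,T]$. This gains a factor $T$ and costs no derivative, so $|\chi_{j+1}|_{C^{k-j}}\MP T|f_j|_{C^{k-j}}\MP(T\mu)^{j+1}$, and hence $|X_{\chi_{j+1}}|_{C^{k-j-1}}\MP(T\mu)^{j+1}<1$ by the smallness assumption $T\mu\MP1$. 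Lemma~\ref{estimflot} then produces $\Phi_{j+1}=\Phi_1^{\chi_{j+1}}$, defined on $\mathcal{D}_{\rho_{j+1}}$ with $\rho_{j+1}=\rho_j-r_{j+1}$, $r_{j+1}\MP T|f_j|_{C^1}\MP(T\mu)^{j+1}$, and with $|\Phi_{j+1}-\mathrm{Id}|_{C^{k-j-1}}\MP(T\mu)^{j+1}$. Expanding $H_j\circ\Phi_{j+1}$ along the flow and using the homological equation, the linear term converts $f_j$ into $[f_j]$, so that setting $g_{j+1}=g_j+[f_j]$ (which still commutes with $l$) the new remainder is
\[ f_{j+1}=\int_0^1\bigl\{(1-t)\{l,\chi_{j+1}\}+g_j+f_j,\chi_{j+1}\bigr\}\circ\Phi_t^{\chi_{j+1}}\,dt. \]
Since $|g_j+f_j|_{C^{k-j}}\MP\mu$, the Poisson bracket and composition estimates of~\ref{PrelE} give $|f_{j+1}|_{C^{k-j-1}}\MP\mu\,(T\mu)^{j+1}=(T\mu)^{j+1}\mu$, which closes the induction.

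Iterating from $j=0$ (where $g_0=0$, $f_0=f$) up to $j=N=k-2$, the composed transformation $\Phi=\Phi_1\circ\cdots\circ\Phi_N$ satisfies $|\Phi-\mathrm{Id}|_{C^2}\MP\sum_{j}(T\mu)^{j+1}\MP T\mu$; the accumulated normal form $g=g_N=\sum_{j=0}^{N-1}[f_j]$ commutes with $l$ and obeys $|g|_{C^2}\MP\sum_j(T\mu)^j\mu\MP\mu$ (dominated by the first term $[f]$), while the final remainder satisfies $|f_N|_{C^2}\MP(T\mu)^{k-2}\mu$. The regularity is limited by the last step, where $\chi_N\in C^3$, $X_{\chi_N}\in C^2$, so $\Phi$ is indeed only $C^2$. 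The total domain loss $\sum_j r_{j+1}\MP T\mu$ is kept below $\rho/2$ by taking the implicit constant in $T\mu\MP1$ small enough. Renaming $f_N$ as $f$ yields the statement.

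The main obstacle is the simultaneous bookkeeping of the three quantities that evolve through the iteration — the order of differentiability (dropping by one per step), the radius of the domain (shrinking by $r_{j+1}$), and the size of the remainder (decaying geometrically) — while verifying at each step that the hypotheses of Lemma~\ref{estimflot}, namely $|X_{\chi_{j+1}}|_{C^0}<r_{j+1}$ and $|X_{\chi_{j+1}}|_{C^{k-j-1}}<1$, remain satisfied, and that the composition estimate~(\ref{estimcomp}) does not accumulate uncontrolled constants across the $k-2$ steps. Care is also needed to confirm that estimating everything through the strongest available norm $C^{k-j}$ at step $j$ dominates the required $C^2$ norms, which is legitimate since $|\,\cdot\,|_{C^2}\leq|\,\cdot\,|_{C^{k-j}}$ throughout.
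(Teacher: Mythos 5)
Your proposal is correct and follows essentially the same route as the paper: the same induction over $k-2$ periodic-averaging steps, with the same explicit solution of the homological equation (your double time-integral is exactly the paper's weighted integral $\frac{1}{T}\int_{0}^{T}t(f_j-[f_j])\circ\Phi_{t}^{l}\,dt$), the same remainder formula (your integrand $(1-t)\{l,\chi_{j+1}\}+g_j+f_j$ equals the paper's $g_j+f_j^t$ with $f_j^t=tf_j+(1-t)[f_j]$), and the same norm and regularity bookkeeping via Lemma~\ref{estimflot} and~(\ref{estimcomp}). The only cosmetic difference is that the paper fixes equal domain losses $r=\rho/(2(k-2))$ in advance, whereas you shrink the domain adaptively by $(T\mu)^{j+1}$ at each step; both work under $T\mu\MP 1$.
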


First note that $\{g,l\}=0$ means exactly that $\partial_\theta g.\omega=0$, and expanding $g$ in a Fourier series, one easily sees that it contains only harmonics associated with integers $k$ satisfying $k.\omega=0$. Therefore the above proposition gives indeed a resonant normal form, up to a polynomial remainder.
 
Note also that we need our transformed Hamiltonian $H\circ\Phi$, and hence our transformation $\Phi$, to be at least of class $C^2$, simply because we need our transformed vector field to be of class $C^1$ to have existence and uniqueness of solutions (or course, a $C^{1,1}$ regularity would have been enough). This explains the factor $(k-2)$ in our stability exponent $a$. 

\begin{proof}
First, as we are assuming $|\omega|\MP 1$, we have 
\begin{equation}\label{omegalin}
|\Phi_t^l|_{C^k(\mathcal{D}_\rho)} \MP 1.
\end{equation} 
Our transformation $\Phi$ will be obtained by a finite composition of averaging transformations. Let us define 
\[ r=\frac{\rho}{2(k-2)}\]
and for $j\in\{0,\dots,k-2\}$, let  
\[ \rho_j=\rho-jr\geq \rho/2. \] 
Then we claim that for any $j\in\{0,\dots,k-2\}$, there exists a $C^{k-j}$ symplectic transformation $\Phi_j : \mathcal{D}_{\rho_j} \rightarrow \mathcal{D}_\rho$ with $|\Phi_j-\mathrm{Id}|_{C^{k-j}(\mathcal{D}_{\rho_j})}\MP T\mu$ such that 
\begin{equation*}
H \circ \Phi_j=l+g_j+f_j,
\end{equation*}
with $g_j$ and $f_j$ of class $C^{k-j}$, $\{g_j,l\}=0$ and the estimates
\begin{equation*}
|g_j|_{C^{k-j}(\mathcal{D}_{\rho_j})} \MP \mu, \quad |f_j|_{C^{k-j}(\mathcal{D}_{\rho_j})} \MP (T\mu)^{j}\mu. 
\end{equation*}
The proposition follows easily by taking $\Phi=\Phi_{k-2}$, $g=g_{k-2}$ and $f=f_{k-2}$. 

We will prove the claim by induction on $j\in\{0,\dots,k-2\}$. For $j=0$, there is nothing to prove since we can simply write $H=l+g_0+f_0$ with $g_0=0$, $f_0=f$ and therefore $\Phi_0$ is the identity. Now assume the claim is true for some $j\in\{0,\dots,k-3\}$, and consider 
\[ H_j=H\circ\Phi_j=l+g_j+f_j.\] 
Let us define
\[ [f_j]=\frac{1}{T}\int_{0}^{T}f_j \circ \Phi_{t}^{l}dt\] 
and
\[\chi_j=\frac{1}{T}\int_{0}^{T}t(f_j-[f_j])\circ \Phi_{t}^{l}dt.\]

We have 
\[ |[f_j]|_{{C^{k-j}}(\mathcal{D}_{\rho_j})} \leq |f_j \circ \Phi_{t}^{l}|_{{C^{k-j}}(\mathcal{D}_{\rho_j})}  \]
hence by~(\ref{estimcomp}) and~(\ref{omegalin}) we obtain  
\[ |[f_j]|_{{C^{k-j}}(\mathcal{D}_{\rho_j})} \MP |f_j|_{{C^{k-j}}(\mathcal{D}_{\rho_j})}  \]
and with our hypotheses of induction, this gives
\[ |[f_j]|_{{C^{k-j}}(\mathcal{D}_{\rho_j})} \MP (T\mu)^{j}\mu.  \]
Similarly 
\[ |\chi_j|_{{C^{k-j}}(\mathcal{D}_{\rho_j})} \MP T(T\mu)^{j}\mu=(T\mu)^{j+1} \]
and since $T\mu \MP 1$,
\[ |\chi_j|_{{C^{k-j}}(\mathcal{D}_{\rho_j})} \MP T\mu. \]

If we let $\Phi^{\chi_j}$ be the time-one map of the Hamiltonian vector field generated by $\chi_j$, then we will show that the map 
\[\Phi_{j+1}=\Phi_j \circ \Phi^{\chi_j}\] 
satisfies the assumptions. 

Indeed, thanks to the condition $T\mu \MP 1$, we can ensure that $\Phi^{\chi_j}$, which is of class $C^{k-j-1}$, is a well-defined embedding 
\[ \Phi^{\chi_j} : \mathcal{D}_{\rho_{j+1}} \longrightarrow \mathcal{D}_{\rho_j}.\] 
Moreover, as $|X_{\chi_j}|_{{C^{k-j-1}}(\mathcal{D}_{\rho_j})} \MP T\mu$ and using once again $T\mu \MP 1$, we can arrange condition~(\ref{smallnCk}) and apply Lemma~\ref{estimflot} to obtain
\[|\Phi^{\chi_j}-\mathrm{Id}|_{C^{k-j-1}(\mathcal{D}_{\rho_{j+1}})}\MP T\mu.\]
Now 
\begin{eqnarray*}
|\Phi_{j+1}-\mathrm{Id}|_{C^{k-j-1}(\mathcal{D}_{\rho_{j+1}})} & = & |\Phi_j \circ \Phi^{\chi_j} - \Phi^{\chi_j} + \Phi^{\chi_j} -\mathrm{Id}|_{C^{k-j-1}(\mathcal{D}_{\rho_{j+1}})}  \\
& \leq & |(\Phi_j - \mathrm{Id})\circ \Phi^{\chi_j}|_{C^{k-j-1}(\mathcal{D}_{\rho_{j+1}})} \\
& + & |\Phi^{\chi_j} -\mathrm{Id}|_{C^{k-j-1}(\mathcal{D}_{\rho_{j+1}})}  \\
& \MP & |\Phi_j - \mathrm{Id}|_{C^{k-j-1}(\mathcal{D}_{\rho_{j}})} + |\Phi^{\chi_j} -\mathrm{Id}|_{C^{k-j-1}(\mathcal{D}_{\rho_{j+1}})}  
\end{eqnarray*}
where we have used~(\ref{estimcomp}) in the last line. By our hypotheses of induction, this eventually gives
\[ |\Phi_{j+1}-\mathrm{Id}|_{C^{k-j-1}(\mathcal{D}_{\rho_{j+1}})}\MP T\mu. \] 

Now by Taylor's formula with integral remainder, we can expand
\[ H_{j+1}=H \circ \Phi_{j+1}=l+g_{j+1}+f_{j+1} \]
with 
\begin{equation*}
g_{j+1}=g_{j}+[f_j], \quad f_{j+1}=\int_{0}^{1}\{g_{j}+f_{j}^{t},\chi_j\}\circ \Phi_{t}^{\chi}dt 
\end{equation*}
where $f_j^t=tf_j+(1-t)[f_j]$, as one can check by a standard calculation. Since $\{g_{j},l\}=0$ by our hypothesis of induction and obviously $\{[f_{j}],l\}=0$, we have $\{g_{j+1},l\}=0$ together with the estimate
\begin{eqnarray*}
|g_{j+1}|_{C^{k-j}(\mathcal{D}_{\rho_{j}})} & \leq  & |g_j|_{C^{k-j}(\mathcal{D}_{\rho_j})} + |[f_j]|_{C^{k-j}(\mathcal{D}_{\rho_j})} \\
& \MP & \mu + (T\mu)^{j}\mu  \\
& \MP & \mu 
\end{eqnarray*}
using $T\mu \MP 1$. For the remainder, using~(\ref{estimcomp}) and the trivial estimate for the Poisson bracket we compute
\begin{eqnarray*}
|f_{j+1}|_{C^{k-j-1}(\mathcal{D}_{\rho_{j+1}})} & \leq & |\{g_{j}+f_{j}^{t},\chi_j\}\circ \Phi_{t}^{\chi_j}|_{C^{k-j-1}(\mathcal{D}_{\rho_{j+1}})} \\
& \MP & |\{g_{j}+f_{j}^{t},\chi_j\}|_{C^{k-j-1}(\mathcal{D}_{\rho_{j}})} \\
& \MP & |\{g_j,\chi_j\}|_{C^{k-j-1}(\mathcal{D}_{\rho_{j}})} + |\{f_j^t,\chi_j\}|_{C^{k-j-1}(\mathcal{D}_{\rho_{j}})}  \\
& \MP & |g_j|_{C^{k-j}(\mathcal{D}_{\rho_{j}})}|\chi_j|_{C^{k-j}(\mathcal{D}_{\rho_{j}})}+|f_j^t|_{C^{k-j}(\mathcal{D}_{\rho_{j}})}|\chi_j|_{C^{k-j}(\mathcal{D}_{\rho_{j}})}  \\
& \MP & \mu(T\mu)^{j+1}+\mu(T\mu)^j(T\mu)^{j+1} \\
& \MP & \mu(T\mu)^{j+1}
\end{eqnarray*}
using once again $T\mu \MP 1$. This concludes the proof.
\end{proof}

\subsection{Normal form}\label{FormCk}

Now let us come back to our original setting which is the Hamiltonian 
\begin{equation*}
\begin{cases}
H(\theta,I)=h(I)+f(\theta,I) \\
|f|_{C^k(\mathcal{D}_R)}<\varepsilon <\!\!< 1.
\end{cases}
\end{equation*}
We will say that an action $I_*\in B_R$ is $T$-periodic if its frequency vector $\nabla h(I_*)$ is $T$-periodic. In the proposition below, we will fix a $T$-periodic action $I_*$, $l$ will be the linear integrable Hamiltonian $l(I)=\omega.I$ associated with the periodic frequency $\omega=\nabla h(I_*)$ and we denote by 
\[ \Pi_I : \T^n \times B_R \rightarrow B_R\] 
the projection onto the action space. 

\begin{proposition}[Normal form]\label{lemmehamCk}
Suppose $H$ is as in~(\ref{HamCk}), with $h$ satisfying~(\ref{bounded}). Under the previous hypotheses, let $\mu>0$ be such that 
\begin{equation}\label{small2}
\varepsilon \MP \mu^2, \quad \mu \MP 1, \quad T\mu \MP 1.
\end{equation}
Then there exists a $C^2$ symplectic transformation 
\[\Phi :\T^n \times B(I_*,\mu)\rightarrow \T^n \times B(I_*,2\mu)\] 
with $|\Pi_I\Phi-\mathrm{Id}_I|_{C^{0}(B(I_*,\mu))}\MP T\mu^2$ such that 
\begin{equation*}
H \circ \Phi=h+g+\tilde{f},
\end{equation*}
with $\{g,l\}=0$ and the estimates
\begin{equation*}
|g+\tilde{f}|_{C^{0}(\T^n \times B(I_*,\mu))} \MP \mu^2, \quad |\partial_{\tilde{\theta}} \tilde{f}|_{C^{0}(\T^n \times B(I_*,\mu))} \MP (T\mu)^{k-2}\mu^2
\end{equation*} 
hold true.
\end{proposition}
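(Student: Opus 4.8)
The plan is to reduce the problem to the linear case already settled in Proposition~\ref{lemmehamLCk} by a localization around $I_*$ followed by a conformal-symplectic rescaling. First I would translate the periodic action to the origin and expand $h$ by Taylor's formula,
\[ h(I)=h(I_*)+\omega\cdot(I-I_*)+R(I), \qquad R(I)=h(I)-h(I_*)-\omega\cdot(I-I_*), \]
so that $R$ is an action-only function whose $C^0$ size on $B(I_*,2\mu)$ is $\MP\mu^2$ by~(\ref{bounded}). Introducing rescaled actions $J=(I-I_*)/\mu$ and dividing the Hamiltonian by $\mu$ — which amounts to scaling the symplectic form by $\mu$ together with a reparametrization of time, and hence leaves the orbits unchanged — I obtain on $\mathcal{D}_2$, up to an additive constant $h(I_*)/\mu$ that I drop before proceeding, a Hamiltonian $\tilde{l}+\tilde{f}$, where $\tilde{l}(J)=\omega\cdot J$ is linear with the same $T$-periodic frequency $\omega$ and
\[ \tilde{f}(\theta,J)=\tfrac{1}{\mu}R(I_*+\mu J)+\tfrac{1}{\mu}f(\theta,I_*+\mu J). \]

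The next step is to estimate $\tilde f$ in the $C^k$ norm on $\mathcal{D}_2$ (the domains are nested in $\mathcal{D}_R$ for $\mu$ small). Each $J$-derivative produces a factor $\mu$, so differentiating $\mu^{-1}R(I_*+\mu J)$ up to order $k$ gives terms $\mu^{|\alpha|-1}(\partial^\alpha_I h)(I_*+\mu J)$ for $|\alpha|\geq 2$, together with the zeroth and first order terms which are $\MP\mu$ by Taylor's estimate; all of these are $\MP\mu$ using~(\ref{bounded}) and $\mu\MP1$. Likewise the worst contribution of $\mu^{-1}f(\theta,I_*+\mu J)$ comes from pure angular derivatives and is $\MP\varepsilon/\mu$, hence $\MP\mu$ thanks to $\varepsilon\MP\mu^2$ in~(\ref{small2}). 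Thus $|\tilde f|_{C^k(\mathcal{D}_2)}\MP\mu=:\nu$. Since $|\omega|=|\nabla h(I_*)|\MP1$ and $T\nu\MP T\mu\MP1$ by~(\ref{small2}), the hypotheses of Proposition~\ref{lemmehamLCk} hold with $\rho=2$, yielding a $C^2$ symplectic map $\hat\Phi:\mathcal{D}_1\to\mathcal{D}_2$, close to the identity by $\MP T\nu$, with $(\tilde l+\tilde f)\circ\hat\Phi=\tilde l+\hat g+\hat f$, $\{\hat g,\tilde l\}=0$, $|\hat g|_{C^2(\mathcal{D}_1)}\MP\nu$ and $|\hat f|_{C^2(\mathcal{D}_1)}\MP(T\nu)^{k-2}\nu$.

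Finally I would undo the rescaling by conjugation, setting $\Phi=S\circ\hat\Phi\circ S^{-1}$ with $S(\theta,J)=(\theta,I_*+\mu J)$; this is again a $C^2$ symplectic map sending $\T^n\times B(I_*,\mu)$ into $\T^n\times B(I_*,2\mu)$. Using $H\circ S=\mu\tilde H$ and restoring the dropped constant via $\omega\cdot(I-I_*)+h(I_*)=h(I)-R(I)$, a direct computation gives $H\circ\Phi=h+g+\tilde f$ with
\[ g=\mu\,\hat g\circ S^{-1}, \qquad \tilde f=-R+\mu\,\hat f\circ S^{-1}. \]
The relation $\{g,l\}=0$ follows from $\{\hat g,\tilde l\}=0$ since $S$ leaves $\theta$ untouched. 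Tracking the powers of $\mu$ then gives $|\Pi_I\Phi-\mathrm{Id}_I|_{C^0}\MP\mu\,|\hat\Phi-\mathrm{Id}|_{C^2}\MP T\mu^2$, and each of the three pieces of $g+\tilde f$ (the terms $\mu\hat g\circ S^{-1}$, $R$, and $\mu\hat f\circ S^{-1}$) is $\MP\mu^2$, which yields the $C^0$ estimate.

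The point I expect to require the most care — and which explains why the statement carries two different bounds — is the control of $\partial_{\tilde\theta}\tilde f$. Because $R$ depends on $I$ alone, it drops out of the angular derivative, so that $\partial_{\tilde\theta}\tilde f=\mu\,(\partial_{\tilde\theta}\hat f)\circ S^{-1}$ is governed entirely by the \emph{small} remainder $\hat f$ of the linear normal form, whence $|\partial_{\tilde\theta}\tilde f|_{C^0}\MP\mu\,|\hat f|_{C^2(\mathcal{D}_1)}\MP(T\mu)^{k-2}\mu^2$. In other words, the action-only Taylor remainder $R$ sets the $C^0$ size $\mu^2$ of the full remainder yet is invisible to its angular gradient — precisely the quantity that will drive the drift of the actions in the next section. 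Keeping the scaling bookkeeping consistent (verifying that dividing by $\mu$ and rescaling $J$ is an admissible conformal-symplectic operation, that the additive constant is harmless, and that the $C^k$ norms transform as claimed) is the main technical burden of the argument.
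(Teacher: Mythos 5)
Your proof is correct and follows essentially the same route as the paper: localize at $I_*$, rescale the actions by $\mu$ and the energy by $\mu^{-1}$, apply Proposition~\ref{lemmehamLCk} with $\rho=2$, and conjugate back by the scaling, exactly as in the paper's proof. The only (harmless) difference is final bookkeeping of the action-only Taylor remainder $R$: the paper absorbs it into the resonant part by setting $\hat{f}_\mu=g_\mu-\mu h_\mu$ (which still Poisson-commutes with $l$), whereas you leave $-R$ inside $\tilde{f}$, which is equally valid since $R$ is independent of $\tilde{\theta}$ and therefore does not affect the bound on $\partial_{\tilde{\theta}}\tilde{f}$, while both splittings give $|g+\tilde{f}|_{C^0}\MP\mu^2$.
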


Let us immediately explain how such coordinates will be used in the sequel. If we denote them by $(\tilde{\theta},\tilde{I})$, then writing down the equations of motion for $\tilde{H}=H\circ\Phi$, using the fact that $\partial_{\tilde{\theta}} g.\omega=0$ and the mean value theorem, one shows that $\tilde{I}(t)$ remains close to the hyperplane orthogonal to $\omega$, passing through $\tilde{I}(0)$, for an interval of time governed by the size of $\partial_{\tilde{\theta}} \tilde{f}$. Hence any potential drift has to occur along that hyperplane. Then this picture persists and gets only slightly distorted when we come back to the original coordinates $(\theta,I)$, since the projection of $\Phi$ onto action space is close to identity. 

\begin{proof}
First note that since $h$ satisfies~(\ref{bounded}), then
\[ |\omega|=|\nabla h(I_*)|<M \]
and therefore the second part of condition~(\ref{smallmu}) is satisfied.

Without loss of generality, we may assume $h(I_*)=0$. To analyze our Hamiltonian $H$ in a neighborhood of size $\mu$ around $I^*$, we translate and rescale the action variables using the map
\[ \sigma_\mu : (\theta,I^\mu) \longmapsto (\theta,I)=(\theta,I_*+\mu I^\mu) \]
which sends the domain $\mathcal{D}_{2}=\T^n \times B_{2}$ onto $\T^n \times B(I_*,2\mu)$, and note that by the condition $\mu \MP 1$, we can assume that the latter domain is included in $\mathcal{D}_R$. Let 
\[ H_\mu=\mu^{-1}(H\circ\sigma_\mu)\] 
be the rescaled Hamiltonian, so $H_\mu$ is defined on $\mathcal{D}_{2}$ and reads
\[ H_\mu(\theta,I^\mu)=\mu^{-1}H(\theta,I_*+\mu I^\mu)=\mu^{-1}h(I_*+\mu I^\mu)+\mu^{-1}f(\theta,I_*+\mu I^\mu) \]
for $(\theta,I^\mu)\in\mathcal{D}_{2}$. Now using Taylor's formula we can expand $h$ around $I_*$ to obtain
\begin{eqnarray*}
h(I_*+\mu I^\mu) & = & \mu\,\omega.I^\mu+\mu^2\int_{0}^{1}(1-t)\nabla^2 h(I_*+t\mu I^\mu)I^\mu.I^\mu dt \\
& = & \mu\,\omega.I^\mu+\mu^{2}h_\mu(I^\mu) 
\end{eqnarray*}
where we set
\[ h_\mu(I^\mu)=\int_{0}^{1}(1-t)\nabla^2 h(I_*+t\mu I^\mu)I^\mu.I^\mu dt. \]
Therefore we can write
\[ H_\mu=l+f_\mu \]
with 
\[ f_\mu=\mu h_\mu+\mu^{-1}(f\circ\sigma_\mu). \]
Now we know that $|f|_{C^k(\mathcal{D}_R)}<\varepsilon \MP \mu^2$ from the first part of condition~(\ref{small2}) and $|h|_{C^k(\mathcal{D}_R)}\MP 1$ since we are assuming~(\ref{bounded}), so we obviously have
\[ |f_\mu|_{C^k(\mathcal{D}_{2})} \MP \mu. \]   
Therefore conditions~(\ref{smallmu}) are satisfied and we can apply Proposition~\ref{lemmehamLCk}, with $\rho=2$, to the Hamiltonian $H_\mu=l+f_\mu$: there exists a $C^2$ symplectic transformation $\Phi_\mu : \mathcal{D}_{1} \rightarrow \mathcal{D}_{2}$ with $|\Phi_\mu-\mathrm{Id}|_{C^{2}(\mathcal{D}_{1})}\MP T\mu$ such that 
\begin{equation*}
H_\mu \circ \Phi_\mu=l+g_\mu+\tilde{f}_\mu
\end{equation*}
with $\{g_\mu,l\}=0$ and the estimates
\begin{equation*}
|g_\mu|_{C^{2}(\mathcal{D}_{1})} \MP \mu, \quad |\tilde{f}_\mu|_{C^{2}(\mathcal{D}_{1})} \MP (T\mu)^{k-2}\mu. 
\end{equation*}
Moreover, if we set
\[ \hat{f}_\mu=g_\mu-\mu h_\mu,\] 
we have $\{\hat{f}_\mu,l\}=0$ and $|\hat{f}_\mu|_{C^{2}(\mathcal{D}_{1})}\MP \mu$, and so the transformed Hamiltonian can also be written as
\begin{equation*}
H_\mu \circ \Phi_\mu=l+\mu h_\mu+\hat{f}_\mu+\tilde{f}_\mu.
\end{equation*}
Now scaling back to our original coordinates, we define $\Phi=\sigma_{\mu} \circ \Phi_\mu \circ \sigma_{\mu}^{-1}$, therefore
\[ \Phi : \T^n\times B(I_*,\mu) \longrightarrow \T^n\times B(I_*,2\mu)\] 
and 
\begin{eqnarray*}
H\circ\Phi & = & \mu H_\mu\circ\Phi_\mu \circ \sigma_{\mu}^{-1} \\
& = & \mu (l+\mu h_\mu+\hat{f}_\mu+\tilde{f}_\mu) \circ \sigma_{\mu}^{-1} \\
& = & (\mu l+\mu^2 h_\mu) \circ \sigma_{\mu}^{-1} + \mu\hat{f}_\mu\circ \sigma_{\mu}^{-1} + \mu\tilde{f}_\mu\circ \sigma_{\mu}^{-1}.  
\end{eqnarray*}
Observe that $(\mu l+\mu^2 h_\mu) \circ \sigma_{\mu}^{-1}=h$, so we may set
\[ g=\mu\hat{f}_\mu\circ \sigma_{\mu}^{-1}, \quad \tilde{f}=\mu\tilde{f}_\mu\circ \sigma_{\mu}^{-1}, \]
and write
\[ H\circ\Phi=h+g+\tilde{f}. \]
It is obvious that $\{g,l\}=0$ with 
\[|g|_{C^{0}(\T^n \times B(I_*,\mu))}\leq\mu |\hat{f}_\mu|_{C^{0}(\mathcal{D}_{1})} \MP \mu^2\]
and similarly
\[ |\tilde{f}|_{C^{0}(\T^n \times B(I_*,\mu))}\leq\mu |\tilde{f}_\mu|_{C^{0}(\mathcal{D}_{1})} \MP (T\mu)^{k-2}\mu^2 \MP \mu^2\]
so 
\[ |g+\tilde{f}|_{C^{0}(\T^n \times B(I_*,\mu))} \MP \mu^2. \]
Moreover, as $\partial_{\tilde{\theta}} \tilde{f}=\mu \partial_{\tilde{\theta}} \tilde{f}_\mu$ then
\[ |\partial_{\tilde{\theta}} \tilde{f}|_{C^{0}(\T^n \times B(I_*,\mu))} \MP (T\mu)^{k-2}\mu^2\]
and finally
\[ |\Pi_I\Phi-\mathrm{Id}_I|_{C^{0}(B(I_*,\mu ))} \MP T\mu^2. \]
is trivial. This ends the proof.
\end{proof}

\section{Proof of Theorem~\ref{theoremeCk2}} \label{s4}

\paraga Now we can complete the proof of our Theorem~\ref{theoremeCk2} in the spirit of Lochak, following three elementary steps that use successively some arithmetic (simultaneous Diophantine approximation), some analysis (normal forms around periodic orbits) and some geometry (quasi-convexity). The analysis has been done in the previous section, and the arithmetic and geometry are exactly the same as in the analytic case or the Gevrey case. Therefore instead of rewriting proofs which are well-known, we will merely explain the ideas and state the relevant results which can be found in~\cite{Loc92} and~\cite{MS02}.

\paraga Let us begin with the arithmetic part, since in order to use our Proposition~\ref{lemmehamCk} (the normal form), we will need to show that any action $I_0 \in B_{R/2}$, which is close to some resonant surface $S_{\Lambda}$, can be approximated by a periodic action. Let $\omega_0=\nabla h(I_0)$, then using the isoenergetic non-degeneracy of $h$, (which is easily implied by the quasi-convexity assumption), it is enough to approximate $\omega_0$ by a periodic vector $\omega$. If $\Lambda$ has rank $n-1$, this is totally obvious, since necessarily we have $\Lambda=\omega^\perp \cap \Z^n$ for some periodic vector $\omega$ and so each action in $S_\Lambda$ is periodic. Now in the case where $\Lambda$ has rank $m=n-d$ with $d>1$, a good approximation is given by a theorem of Dirichlet, which moreover gives an explicit bound on the period $T$. 

\begin{proposition}\label{propCk2}
Let $I_0\in B_{R/2}$, $\Lambda$ be a sub-module of $\Z^n$ of rank $m=n-d$, with $d>1$ and $Q$ be a real number such that  
\begin{equation} \label{small1}
Q \PS 1. 
\end{equation}
Then there exists a $T$-periodic action $I_*\in B_R$ such that
\[ |I_0-I_*| \MP \max\left(d(I_0,S_\Lambda),T^{-1}Q^{-\frac{1}{d-1}}\right) \]
and the period $T$ satisfies
\[ 1\MP T \MP Q. \]
\end{proposition}

The proof of the above proposition can be found in~\cite{MS02}, Corollary $3.2$.

\paraga Now it remains to explain how the quasi-convexity hypothesis~(\ref{quasiconv}) on $h$, together with the normal form obtained in Proposition~\ref{lemmehamCk}, will enable us to control the variation $|I(t)-I_0|$, for an initial action $I_0$ close to some periodic action $I_*$. The idea goes as follows.

Let $\omega=\nabla h(I_*)$. We have already explained after Proposition~\ref{lemmehamCk} how in those new coordinates, the evolution of the actions in the direction generated by $\omega$ is controlled for some interval of time: if $F$ is the hyperplane orthogonal to $\omega$, then $\tilde{I}(t)$ remains close to the affine subspace $\tilde{I}_0+F$ for an interval of time $|t|\leq\tau$ where $\tau$ is essentially given by the inverse of the size of $\tilde{f}$. Now by preservation of energy, for all time $\tilde{I}(t)$ remains close to the unperturbed energy hypersurface $E_0=\{\tilde{I}\in B_R \;|\; h(\tilde{I})=h(\tilde{I}_0)\}$, and as the latter is strictly convex by quasi-convexity of $h$, the connected component of $\tilde{I}_0$ in $E_0 \cap (\tilde{I}_0+F)$ is in fact bounded and so is the variation $|\tilde{I}(t)-\tilde{I}_0|$ for $|t|\leq\tau$.

This idea is formalized in the proposition below. Once again, $I_*$ is a $T$-periodic action and $l$ is the linear Hamiltonian with periodic frequency $\omega=\nabla h(I_*)$.  

\begin{proposition}\label{propCk1}
Under the previous hypotheses, let $r>0$, $\tau>0$ and 
\[ \tilde{H}=h+g+\tilde{f} \in C^2(\T^n \times B(I_*,r))\] 
with $h$ satisfying~(\ref{quasiconv}), $\{g,l\}=0$ and the estimates
\[ |g+\tilde{f}|_{C^0(\T^n \times B(I_*,r))} < r^2, \quad |\partial_{\tilde{\theta}} \tilde{f}|_{C^0(\T^n \times B(I_*,r))} < r^2\tau^{-1}. \]
If
\begin{equation} \label{small3}
r \MP 1 
\end{equation} 
then for any initial condition $(\tilde{\theta}_0,\tilde{I}_0)\in \T^n \times B(I_*,r)$, the solution satisfies
\[ |\tilde{I}(t)-\tilde{I}_0| \MP r, \quad |t| \leq \tau. \]
\end{proposition}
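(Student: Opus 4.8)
The plan is to exploit the autonomous Hamiltonian structure of $\tilde H=h+g+\tilde f$ through two a priori estimates — one coming from the resonance condition $\{g,l\}=0$, one from conservation of energy — and then to convert them into a bound on $|\tilde I(t)-\tilde I_0|$ using the quasi-convexity of $h$, following the three-step scheme announced just before the statement. Throughout I denote by $(\tilde\theta(t),\tilde I(t))$ the solution and set $J(t)=\tilde I(t)-\tilde I_0$.

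First I would control the drift of the actions along the periodic direction $\omega=\nabla h(I_*)$. From Hamilton's equations $\dot{\tilde I}=-\partial_{\tilde\theta}\tilde H$ and the fact that $h$ depends only on the actions, one gets
\[ \frac{d}{dt}\big(\omega\cdot\tilde I\big)=-\omega\cdot\partial_{\tilde\theta}g-\omega\cdot\partial_{\tilde\theta}\tilde f. \]
The hypothesis $\{g,l\}=0$ is exactly $\partial_{\tilde\theta}g\cdot\omega=0$, so the first term vanishes, and since $|\omega|=|\nabla h(I_*)|\MP 1$ by~(\ref{bounded}), integrating the remaining term over $|t|\leq\tau$ gives $|\omega\cdot J(t)|\leq|\omega|\,\tau\,|\partial_{\tilde\theta}\tilde f|_{C^0}\MP r^2$. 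Thus $\tilde I(t)$ stays within $\MP r^2$ of the affine hyperplane $\tilde I_0+\omega^\perp$. Simultaneously, as $\tilde H$ is autonomous it is conserved, whence
\[ |h(\tilde I(t))-h(\tilde I_0)|=|(g+\tilde f)(\tilde\theta(t),\tilde I(t))-(g+\tilde f)(\tilde\theta_0,\tilde I_0)|\leq 2|g+\tilde f|_{C^0}\MP r^2, \]
so $\tilde I(t)$ also stays within $\MP r^2$, in energy, of the level set $\{h=h(\tilde I_0)\}$.

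The decisive step is to turn these two ``$r^2$-smallness'' constraints into the bound $|J|\MP r$ using~(\ref{quasiconv}); this is the step I expect to be the main obstacle, and it is precisely the geometric ingredient shared with the analytic and Gevrey cases. I would expand by Taylor's formula
\[ h(\tilde I_0+J)-h(\tilde I_0)=\nabla h(\tilde I_0)\cdot J+\int_0^1(1-s)\,\nabla^2 h(\tilde I_0+sJ)J\cdot J\,ds. \]
The left-hand side is $\MP r^2$ by energy conservation, while $\nabla h(\tilde I_0)\cdot J=\omega\cdot J+(\nabla h(\tilde I_0)-\omega)\cdot J$ is $\MP r^2+r|J|$, the second piece because $|\nabla h(\tilde I_0)-\omega|\MP|\tilde I_0-I_*|\MP r$ by~(\ref{bounded}); hence the integral term is $\MP r^2+r|J|$. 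For the matching lower bound I would invoke the uniform convexity furnished by~(\ref{quasiconv}): for $\tilde I_0$ and $J$ small enough there is a constant $c>0$ with $\nabla^2 h(\tilde I_0+sJ)J\cdot J\geq c|J|^2$. Here care is needed, since~(\ref{quasiconv}) only controls directions tangent to the energy surface: one decomposes $J$ into its components along and orthogonal to $\nabla h$, bounds the off-diagonal terms by observing that $J$ is nearly tangent (its $\omega$-component being $\MP r^2$), and treats the region where $|\omega|$ is small separately, where a compactness argument upgrades~(\ref{quasiconv}) to convexity in all directions. Feeding $c|J|^2$ into the inequality yields $|J|^2\MP r^2+r|J|$, hence $|J|\MP r$.

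Finally, since this lower bound is only legitimate while $J$ stays small, I would close the argument with a standard continuity (bootstrap) argument in $t$: the estimate $|J(t)|\MP r$ is self-improving as long as it holds, so it persists on the whole interval $|t|\leq\tau$, and in particular the solution remains in the domain $\T^n\times B(I_*,r)$ on which $\tilde H$ is defined — using $r\MP 1$ and the fact that in the intended application $\tilde I_0$ lies well inside the ball. This delivers the claimed bound $|\tilde I(t)-\tilde I_0|\MP r$ for $|t|\leq\tau$.
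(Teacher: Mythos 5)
Your proposal is correct and follows exactly the scheme the paper itself sketches and then delegates to \cite{MS02}, Corollary 3.1: almost-conservation of $\omega\cdot\tilde I$ coming from $\{g,l\}=0$, conservation of energy, and quasi-convexity made quantitative by Taylor expansion plus a continuity/bootstrap argument. The one genuinely delicate point --- that condition~(\ref{quasiconv}) controls only directions tangent to the energy level, which becomes problematic where $|\nabla h|$ is small --- is correctly identified and correctly resolved by your dichotomy: when $|\omega|$ is bounded below the near-tangency of $J$ controls the cross terms, and where $\nabla h$ is small a compactness argument does upgrade~(\ref{quasiconv}) to definiteness in all directions, since at a critical point every vector is orthogonal to the (vanishing) gradient.
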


Once again, we refer to \cite{MS02}, Corollary $3.1$, for a complete proof.

\paraga Let us now conclude the proof of Theorem~\ref{theoremeCk2}. In a first step we will use Proposition~\ref{propCk2} to find a periodic action close to our initial action, then in a second step we will apply Proposition~\ref{lemmehamCk} to find adapted coordinates and the third step will consist in applying Proposition~\ref{propCk1} to control the evolution of the action variables in those coordinates, and hence in the original coordinates. 

\begin{proof}[Proof of Theorem~\ref{theoremeCk2}]
Let $I_0\in B_{R/2}$, let $\Lambda$ be a sub-module of $\Z^n$ of rank $m=n-d$, and assume that $d(I_0,S_{\Lambda}) \MP \sqrt\varepsilon$ (note that this last assumption is void if $\Lambda$ is trivial).

\bigskip

{\it First step.} In the case $d=1$, any action $I\in S_{\Lambda}$ has a frequency $\nabla h(I)$ which is a multiple of some non zero vector $k_*\in\Z^n$, therefore we can choose a periodic action $I_*\in S_{\Lambda}$ so 
\[ |I_0-I_*|=d(I_0,S_{\Lambda}) \MP \sqrt\varepsilon \]
and the period $T$ trivially satisfies $T\MP 1$. In the case $d>1$, we apply Proposition~\ref{propCk2} with
\[ Q \EP \varepsilon^{-\frac{d-1}{2d}}. \]
and the condition~(\ref{small1}) gives a first smallness condition on $\varepsilon$. Observe that $Q^{-\frac{1}{d-1}} \EP \varepsilon^{\frac{1}{2d}}$, hence the periodic action $I_*$ given by the proposition satisfies
\[ |I_0-I_*| \MP \max\left(d(I_0,S_\Lambda),T^{-1}Q^{-\frac{1}{d-1}}\right) \MP T^{-1}\varepsilon^{\frac{1}{2d}} \] 
and the period $T$ 
\begin{equation}\label{estimT}
1\MP T \MP \varepsilon^{-\frac{d-1}{2d}}.
\end{equation}

\bigskip

{\it Second step.} Having found a periodic action, we will now apply Proposition~\ref{lemmehamCk} with 
\[ \mu \EP T^{-1}\varepsilon^{\frac{1}{2d}}. \]
With this choice, for $d>1$ the first part of condition~(\ref{small2}) is satisfied thanks to the upper bound~(\ref{estimT}) on the period $T$ (for $d=1$, this is trivial). The other conditions in~(\ref{small2}) give only further smallness conditions on $\varepsilon$. Applying the proposition, we have a $C^2$ symplectic transformation 
\[\Phi :\T^n \times B(I_*,\mu)\rightarrow \T^n \times B(I_*,2\mu)\] 
with $|\Pi_I\Phi-\mathrm{Id}|_{C^{0}(B(I_*,\mu))}\MP T\mu^2$ such that 
\begin{equation*}
H \circ \Phi=h+g+\tilde{f}
\end{equation*}
with $\{g,l\}=0$ and the estimates
\begin{equation*}
|g+\tilde{f}|_{C^{0}(\T^n \times B(I_*,\mu))} \MP \mu^2, \quad |\partial_{\tilde{\theta}} \tilde{f}|_{C^{0}(\T^n \times B(I_*,\mu))} \MP (T\mu)^{k-2}\mu^2. 
\end{equation*} 
Let us write $\tilde{H}=H \circ \Phi$ and $(\tilde{\theta},\tilde{I})$ are the new coordinates in $\T^n \times B(I_*,\mu)$.

\bigskip

{\it Third step.} Now we set
\[ r \EP \mu, \quad \tau \EP (T\mu)^{k-2},  \]
and we apply Proposition~\ref{propCk1} to the Hamiltonian $\tilde{H}$. To do so, we need to impose condition~(\ref{small3}) and this gives our last smallness condition on $\varepsilon$. Therefore we find
\[ |\tilde{I}(t)-\tilde{I}_0| \MP \mu, \quad |t| \MP (T\mu)^{k-2}, \]
and recalling that 
\[ \mu\EP T^{-1}\varepsilon^{\frac{1}{2d}} \MP \varepsilon^{\frac{1}{2d}}\] 
this gives
\[ |\tilde{I}(t)-\tilde{I}_0| \MP \varepsilon^{\frac{1}{2d}}, \quad |t| \MP \varepsilon^{\frac{k-2}{2d}}. \]
Now since
\[ |\Pi_I\Phi-\mathrm{Id}|_{C^{0}(B(I_*,\mu))}\MP T\mu^2 \MP \varepsilon^{\frac{1}{2d}}, \]
standard arguments give the conclusion
\[ |I(t)-I_0| \MP \varepsilon^{\frac{1}{2d}}, \quad |t| \MP \varepsilon^{\frac{k-2}{2d}}. \] 
This ends the proof.
\end{proof}

{\it Acknowledgements.} 
The author thanks Jean-Pierre Marco, Laurent Niederman and also H. Scott Dumas for a careful reading of a first version of the manuscript.

\addcontentsline{toc}{section}{References}
\bibliographystyle{amsalpha}
\bibliography{nekhoCk2}
\end{document}